\newtheorem{thm}{Theorem}[section]
\newtheorem{lem}[thm]{Lemma}
\newtheorem{cor}[thm]{Corollary}
\theoremstyle{definition}
\newtheorem{conj}[thm]{Conjecture}
\newtheorem{claim}{Claim}
\newtheorem*{claim*}{Claim}
\theoremstyle{remark}
\numberwithin{equation}{thm}
\def\cok{\operatorname{Coker}}
\def\cmz{\operatorname{\mathsf{CM}_0}}
\def\scmz{\operatorname{\underline{\mathsf{CM}}_0}}
\def\End{\operatorname{End}}
\def\Ext{\operatorname{Ext}}
\def\Hom{\operatorname{Hom}}
\def\im{\operatorname{Im}}
\def\ker{\operatorname{Ker}}
\def\lend{\operatorname{\underline{End}}}
\def\lhom{\operatorname{\underline{Hom}}}
\def\lmod{\operatorname{\mathsf{\underline{mod}}}}
\def\m{\mathfrak{m}}
\def\p{\mathfrak{p}}
\def\rank{\operatorname{rank}}
\def\syz{\mathsf{\Omega}}
\begin{document}
\allowdisplaybreaks
\title{The Huneke--Wiegand conjecture and middle terms of almost split sequences}
\author{Toshinori Kobayashi}
\address{Graduate School of Mathematics, Nagoya University, Furocho, Chikusaku, Nagoya, Aichi 464-8602, Japan}
\email{m16021z@math.nagoya-u.ac.jp}
\subjclass[2010]{13C60, 13H10, 16G70}
\keywords{Huneke--Wiegand conjecture, almost split sequence, Cohen--Macaulay ring, Gorenstein ring, maximal Cohen--Macaulay module}
\thanks{The author was partly supported JSPS Grant-in-Aid for JSPS Fellows 18J20660.}
\begin{abstract}
Let $R$ be a Gorenstein local domain of dimension one.
We show that a nonfree maximal Cohen--Macaulay $R$-module $M$ possessing more than one nonfree indecomposable summand in the middle term of the almost split sequence ending in $M$ has a nonvanishing self extension.
In other words, we show that the Huneke--Wiegand conjecture is affirmative for such $R$-modules $M$.
\end{abstract}
\maketitle
\section{Introduction}

In this paper, we study the following conjecture of Huneke and Wiegand; see \cite[the discussion following the proof of 5.2]{HW}.

\begin{conj}[Huneke and Wiegand \cite{HW}] \label{conj}
	Let $R$ be a Gorenstein local domain of dimension one.
	Let $M$ be a maximal Cohen--Macaulay $R$-module.
	If $M\otimes_R \Hom_R(M,R)$ is torsion-free, then $M$ is free.
\end{conj}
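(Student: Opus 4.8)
The plan is to verify the conjecture for a given $M$ by converting the torsion-freeness of $M\otimes_R M^{*}$ (where $M^{*}=\Hom_R(M,R)$) into the vanishing of a self-extension group, and then to detect a nonzero self-extension from the almost split sequence ending in $M$. First I would record the reduction. Consider the evaluation map $\lambda\colon M^{*}\otimes_R M\to\End_R(M)$, $\phi\otimes m\mapsto(x\mapsto\phi(x)m)$. Since $M$ is maximal Cohen--Macaulay over a one-dimensional Gorenstein domain it is reflexive, $\End_R(M)$ is torsion-free, and $\lambda$ becomes an isomorphism after inverting the nonzero elements of $R$; hence $\ker\lambda$ is exactly the torsion submodule of $M\otimes_R M^{*}$. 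Moreover $\operatorname{Im}\lambda$ consists of the endomorphisms factoring through a free module, so $\cok\lambda=\lend_R(M)$, which vanishes iff $M$ is free. Using the four-term exact sequence attached to the Auslander transpose $\tr M$ (together with $\syz^{2}\tr M\cong M^{*}$), $\ker\lambda$ is identified with $\Ext^{1}_R(\tr M,M^{*})$, and a short computation in the stable category shows this is nonzero exactly when $\Ext^{1}_R(M,M)\neq0$. Thus the conjecture for $M$ reduces to the implication ``$M$ nonfree $\Rightarrow\Ext^{1}_R(M,M)\neq0$'', and it suffices to produce a nonzero self-extension under the middle-term hypothesis.

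Next I would set up the Auslander--Reiten machinery. A one-dimensional local domain is automatically an isolated singularity, so after passing to the completion (harmless for the nonvanishing of $\Ext^{1}_R(M,M)$) the category of maximal Cohen--Macaulay modules admits almost split sequences, and $M$ may be taken indecomposable and nonfree. Write the almost split sequence ending in $M$ as $0\to\tau M\xrightarrow{g}E\xrightarrow{p}M\to0$ with $\tau M\cong\syz M$, and decompose $E=\bigoplus_i E_i\oplus F$ with the $E_i$ nonfree indecomposable and $F$ free. Because the stable category is $0$-Calabi--Yau in dimension one, Auslander--Reiten duality gives $\Ext^{1}_R(M,M)\cong D\,\lhom_R(M,\tau M)\cong\lhom_R(\tau M,M)$, where $D$ denotes Matlis duality; so the task becomes the construction of a nonzero stable map $\tau M\to M$.

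For the core argument I would apply $\Hom_R(M,-)$ and $\Hom_R(-,M)$ to the almost split sequence. The sink- and source-map properties identify the relevant images with the radical of $\End_R(M)$, while the relation $pg=0$ yields the mesh relation $\sum_i\underline{p_ig_i}=0$ in $\lhom_R(\tau M,M)$. When two nonfree summands $E_1,E_2$ are present, the plan is to show this relation cannot hold vacuously: the composites $\underline{p_ig_i}$ of the irreducible maps cannot all vanish stably, so one of them—or the combination forced by the relation—is a nonzero element of $\lhom_R(\tau M,M)=\Ext^{1}_R(M,M)$. Equivalently, realizing the stable middle term as the cocone of the connecting map $\epsilon\in\operatorname{soc}\lend_R(M)$, one argues that $M$ rigid forces this cocone to be indecomposable, contradicting the presence of two nonfree summands.

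The hard part will be exactly this last step: showing that two distinct irreducible maps into $M$ cannot coexist with $\Ext^{1}_R(M,M)=0$. Crude bookkeeping is not enough—tracking lengths only gives (number of nonfree summands) $\le\dim\lend_R(M)-1$, which does not force the count down to one. To overcome this I expect one must exploit the multiplicative (symmetric, $0$-Calabi--Yau) structure of $\lend_R(M)$, in particular the simplicity of its socle, together with the precise composition behaviour of irreducible maps encoded in the mesh relations, so as to rule out the simultaneous stable vanishing of the relevant composites. This interplay between the decomposition of the middle term and the socle of the stable endomorphism algebra is, I anticipate, the crux of the proof.
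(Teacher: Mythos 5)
There is a fundamental mismatch here: the statement you were asked to prove is Conjecture~\ref{conj} itself, in full generality, and neither the paper nor your proposal proves it --- the paper explicitly states the conjecture is still open and only establishes the special case recorded as Theorem~\ref{thm}, namely when the middle term of the almost split sequence ending in $M$ has more than one nonfree indecomposable summand. Your proposal silently imports that same ``middle-term hypothesis'' partway through (``it suffices to produce a nonzero self-extension under the middle-term hypothesis''), which is not part of the conjecture; the case $\alpha(M)=1$ is left untouched by both you and the paper, so what you have can at best be an argument for Theorem~\ref{thm}. Your opening reduction --- that torsion-freeness of $M\otimes_R\Hom_R(M,R)$ is equivalent to $\Ext^1_R(M,M)=0$ for torsion-free $M$ --- is correct and is exactly the reduction the paper uses (citing \cite{HJ}), so that part is fine.

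Even as a proof of the special case, your sketch has a genuine gap at precisely the point you flag as ``the hard part.'' The mesh relation $\sum_i \underline{p_ig_i}=0$ in $\lhom_R(\tau M,M)$ holds automatically (since $pg=0$ on the nose) and carries no information by itself; and if $\Ext^1_R(M,M)=0$ then $\lhom_R(\syz M,M)=0$, so \emph{every} composite $p_ig_i$ is stably zero --- there is no contradiction to be extracted from the relation alone. Showing that rigidity is incompatible with two nonfree middle summands is the entire content of the theorem, and you offer only an anticipated strategy (socle of $\lend_R(M)$, Calabi--Yau symmetry) rather than an argument. The paper's actual route is quite different and concrete: it replaces $M$ by a syzygy of minimal rank (using that $\alpha$ is syzygy-invariant, Lemma~\ref{22}), uses the dichotomy that irreducible maps between indecomposables in dimension one are injective or surjective (Lemma~\ref{l4}), runs a rank count to kill the case where some $f_p$ is injective and $g_p$ surjective, and in the remaining case shows via Lemma~\ref{27} that $M$ is two-generated of rank one, at which point Herzinger's theorem \cite{He} yields $\Ext^1_R(M,M)\ne0$. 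None of these steps appears in your proposal, and the step you would need to supply in their place is exactly the one you leave open.
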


Huneke and Wiegand \cite{HW} showed that this conjecture is true for hypersurfaces.
Many other partial answers are known \cite{C,CGTT,GL,GTTT,HIW,R3}, but, the conjecture is still open in general.
Let $R$ be a Gorenstein local domain of dimension one.
A finitely generated $R$-module is torsion-free if and only if it is reflexive if and only if it is maximal Cohen--Macaulay.
Therefore Conjecture \ref{conj} implies the Auslander--Reiten conjecture for Gorenstein local domains (\cite[Proposition 5.10]{CT}).
Assume that $M$ is a torsion-free $R$-module.
Then it is remarkable that the torsion-freeness of $M\otimes_R \Hom_R(M,R)$ is equivalent to saying that $\Ext^1_R(M,M)$ is zero; see \cite[Theorem 5.9]{HJ}.

The main result of this paper is the following.

\begin{thm} \label{thm}
	Let $(R,\m)$ be a Gorenstein local domain of dimension one.
	Let $M$ be a nonfree indecomposable torsion-free $R$-module.
	Assume that the number of indecomposable summand in the middle term of the Auslander--Reiten sequence ending in $M$ is greater than one.
	Then one has $\Ext^1_R(M,M)\not=0$.
	Hence, Conjecture \ref{conj} holds true for $M$.
\end{thm}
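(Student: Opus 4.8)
The plan is to prove the contrapositive: assuming $\Ext^1_R(M,M)=0$, I would show that the middle term of the almost split sequence ending in $M$ has at most one nonfree indecomposable summand. The first step is to install the Auslander--Reiten machinery in the right category. Since $R$ is a one-dimensional Gorenstein domain its punctured spectrum consists only of the generic point and is regular, so every torsion-free ($=$ maximal Cohen--Macaulay) module is locally free on the punctured spectrum; hence the almost split sequence ending in the nonfree indecomposable $M$ exists, and the stable category of maximal Cohen--Macaulay modules is a Hom-finite, Krull--Schmidt, $0$-Calabi--Yau triangulated category with suspension $\syz^{-1}$ and Auslander--Reiten translate $\tau=\syz$. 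Two consequences I would record at the outset are the identification $\Ext^1_R(M,M)\cong\lhom_R(\syz M,M)$ and the fact that the almost split sequence $0\to\syz M\to E\to M\to 0$ induces an almost split triangle $\syz M\to\overline{E}\to M\xrightarrow{w}M$ in which precisely the free direct summands of $E$ are deleted, so that the indecomposable summands of $\overline E$ are exactly the nonfree indecomposable summands of $E$.

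Next I would extract the homological heart of the argument. Applying the cohomological functor $\lhom_R(-,M)$ to the almost split triangle produces an exact sequence
$$\lend_R(M)\xrightarrow{\ \overline{g}^{*}\ }\lhom_R(\overline E,M)\xrightarrow{\ \overline{f}^{*}\ }\lhom_R(\syz M,M),$$
and the right-hand term is isomorphic to $\Ext^1_R(M,M)$, which vanishes by assumption. Exactness then forces $\overline g^{*}$ to be surjective, so that $\lhom_R(\overline E,M)=\lend_R(M)\cdot\overline g$ is a cyclic left $\lend_R(M)$-module. Here I would be careful to check that $\overline g\neq0$ in the stable category: if the almost split epimorphism $g\colon E\to M$ factored through a free module it would exhibit $M$ as the image of a free module under a split surjection, forcing $M$ to be free, contrary to hypothesis.

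The final step turns cyclicity into a bound on the number of summands. Because $M$ is indecomposable and nonfree, $\lend_R(M)$ is a (noncommutative) local ring, and a nonzero finitely generated cyclic module over a local ring has a simple top, hence is indecomposable; thus $\lhom_R(\overline E,M)$ is indecomposable as a left $\lend_R(M)$-module. On the other hand, writing $\overline E=\bigoplus_j\overline E_j$ as a sum of indecomposables gives a decomposition $\lhom_R(\overline E,M)=\bigoplus_j\lhom_R(\overline E_j,M)$ of left $\lend_R(M)$-modules in which every summand is nonzero, since each component $\overline E_j\to M$ of the right almost split map is irreducible and therefore nonzero in the stable category (again using that $M$ and $\overline E_j$ are nonfree). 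Comparing the two statements forces $\overline E$ to have a single indecomposable summand, i.e.\ $E$ has at most one nonfree indecomposable summand. Taking the contrapositive yields $\Ext^1_R(M,M)\neq0$ whenever $E$ has more than one nonfree indecomposable summand, and then $M\otimes_R\Hom_R(M,R)$ fails to be torsion-free by \cite[Theorem 5.9]{HJ}, which is Conjecture \ref{conj} for $M$.

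The step I expect to be the main obstacle is the bookkeeping at the interface between the module category and the stable category, in two respects. First, one must justify the Auslander--Reiten identifications ($\tau=\syz$, the $0$-Calabi--Yau duality, and Hom-finiteness of the stable homomorphism groups) carefully, presumably after passing to the completion to guarantee Krull--Schmidt. Second, and more seriously, the functor $\lhom_R(-,M)$ annihilates free modules, so the argument genuinely controls only the nonfree summands of $E$; to obtain the theorem exactly as stated one must still show that a free summand cannot occur in the middle term when $\Ext^1_R(M,M)=0$ (equivalently, that counting nonfree summands suffices), which is the delicate point the decomposition argument does not by itself settle.
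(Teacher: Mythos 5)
Your core homological argument is correct and is genuinely different from the paper's: applying $\lhom_R(-,M)$ to the almost split triangle and using $\lhom_R(\syz M,M)\cong\Ext^1_R(M,M)=0$ does show that $\lhom_R(\overline{E},M)$ is a cyclic left module over the local ring $\lend_R(M)$, hence indecomposable, while each $\lhom_R(\overline{E}_j,M)$ is nonzero because an irreducible map between nonfree modules cannot factor through a free module (either factor being split would force $\overline{E}_j$ or $M$ to be free). This forces $\overline{E}$ to be indecomposable. The paper instead argues entirely inside the module category, using Roy's lemma that an irreducible map between indecomposables over a one-dimensional ring is injective or surjective, a rank-minimization over the syzygies $\syz^i M$, and finally Herzinger's theorem on two-generated ideals; your route avoids all of that and is considerably shorter for the part of the statement it actually covers.

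However, the gap you flag at the end is real and not optional: the theorem counts \emph{all} indecomposable summands of $E_M$, free ones included, whereas $\lhom_R(-,M)$ annihilates free summands, so your argument is silent about the case $E_M\cong R^{\oplus k}\oplus E_1$ with $k\ge 1$. This case must be excluded, and the paper does so as follows: a free summand of $E_M$ yields an irreducible homomorphism $\syz M=\tau M\to R$, which forces $\syz M$ to be a direct summand of $\m$; since $R$ is a domain, $\m$ is torsion-free of rank one and hence indecomposable, so $\syz M\cong\m$; then $\Ext^1_R(\m,\m)\cong\Ext^1_R(\syz M,\syz M)\cong\Ext^1_R(M,M)=0$, which forces $R$ to be regular and hence $M$ to be free, a contradiction. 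Without this (or an equivalent) step you have proved only the assertion of the abstract (at most one \emph{nonfree} indecomposable summand), not the theorem as stated. A secondary, smaller point: your justification that $\overline{g}\neq 0$ is not right as written --- if $g$ factors through a free module $F$ you only get a surjection $F\to M$, not a split one --- but this step is dispensable, since the nonvanishing of each $\lhom_R(\overline{E}_j,M)$ already suffices for the counting argument.
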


Remark that Roy \cite{R3} showed that for one-dimensional graded complete intersections $R$ satisfying some condition on the a-invariant, the assertion of Theorem \ref{thm} holds.
Our result is local (not graded), and we do not assume that the ring is a complete intersection.

In section 2, we give some preliminaries.
In section 3, the proof of Theorem \ref{thm} is given.

\section{Irreducible homomorphisms and almost split sequences}
In this section, we prove lemmas needed to prove the main theorem.
In the rest of this paper, let $(R,\m)$ be a commutative Gorenstein henselian local ring, and all modules are finitely generated, unless otherwise stated.
We denote by $\cmz(R)$ the category of maximal Cohen--Macaulay $R$-modules $M$ such that $M_\p$ is $R_\p$-free for any nonmaximal prime ideal $\p$ of $R$.
For an $R$-module $M$, $\syz M$ (resp. $\syz^i M$) denotes the first (resp. $i$-th) syzygy module in the minimal free resolution of $M$.

For $R$-modules $M$ and $N$, let $\lhom_R(M,N)$ denote the quotient of $\Hom_R(M,N)$ by the set of homomorphisms from $M$ to $N$ factoring through a free $R$-module.
Since $R$ is Gorenstein, the stable category $\scmz(R)$ of $\cmz(R)$ is a triangulated category.
Its morphism set is equal to the stable homset $\lhom_R(-,-)$ and its shift functor is the functor taking $\syz$; see \cite[Chapter 1]{Ha} for instance.
Hence we obtain the following lemma.

\begin{lem} \label{l31}
	Let $M,N$ be $R$-modules in $\cmz(R)$.
	Then we have the following isomorphisms.\\
{\rm(1)} $\lhom_R(\syz M,N)\cong \Ext^1_R(M,N)$, \qquad
{\rm(2)} $\Ext^1_R(M,N)\cong \Ext^1_R(\syz M,\syz N)$, \\
{\rm(3)} $\lhom_R(M,N)\cong \lhom_R(\syz M,\syz N)$.
\end{lem}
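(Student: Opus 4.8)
The plan is to obtain all three isomorphisms as formal consequences of the triangulated structure on $\scmz(R)$ together with a single short exact sequence. Throughout, I would first record that since $M,N\in\cmz(R)$ are maximal Cohen--Macaulay over the Gorenstein ring $R$, their syzygies $\syz M,\syz N$ again lie in $\cmz(R)$: the depth lemma applied to $0\to\syz M\to F\to M\to 0$ shows $\syz M$ is maximal Cohen--Macaulay, and localizing at a nonmaximal prime $\p$ gives a sequence that splits because $M_\p$ is free, so $(\syz M)_\p$ is a direct summand of $F_\p$ and hence free. Thus every statement below may also be applied with $M,N$ replaced by $\syz M,\syz N$.

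For (1), I would start from the exact sequence $0\to\syz M\xrightarrow{\iota} F\to M\to 0$ coming from the minimal free resolution of $M$ and apply $\Hom_R(-,N)$. Since $F$ is free we have $\Ext^1_R(F,N)=0$, so the long exact sequence yields $\Ext^1_R(M,N)\cong\cok\bigl(\Hom_R(F,N)\xrightarrow{\iota^\ast}\Hom_R(\syz M,N)\bigr)$. It then remains to identify $\im(\iota^\ast)$ with the submodule of homomorphisms $\syz M\to N$ that factor through a free module, that is, the submodule by which one quotients to form $\lhom_R(\syz M,N)$. One inclusion is clear; for the other, if $g\colon\syz M\to N$ factors through a free module $P$, then, because free modules are the injective objects of the Frobenius category of maximal Cohen--Macaulay modules, the component $\syz M\to P$ extends along the admissible monomorphism $\iota$ to a map $F\to P$, so $g$ extends over $\iota$ and hence lies in $\im(\iota^\ast)$. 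This gives $\Ext^1_R(M,N)\cong\lhom_R(\syz M,N)$.

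For (3), I would simply invoke that the shift functor $\syz$ of the triangulated category $\scmz(R)$ is an autoequivalence, hence fully faithful, and therefore induces isomorphisms of abelian groups $\lhom_R(M,N)\cong\lhom_R(\syz M,\syz N)$. Statement (2) then follows by chaining (1) and (3): applying (1) gives $\Ext^1_R(M,N)\cong\lhom_R(\syz M,N)$; applying (3) with $M,N$ replaced by $\syz M,N$ gives $\lhom_R(\syz M,N)\cong\lhom_R(\syz^2 M,\syz N)$; and applying (1) with $M,N$ replaced by $\syz M,\syz N$ gives $\lhom_R(\syz^2 M,\syz N)\cong\Ext^1_R(\syz M,\syz N)$. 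Composing these yields $\Ext^1_R(M,N)\cong\Ext^1_R(\syz M,\syz N)$.

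The only genuinely delicate point is the identification in step (1) of ``factors through the particular free module $F$'' with ``factors through some free module''; everything else is formal. I expect this to cause no real difficulty once the injectivity of free modules in the Frobenius structure on $\cmz(R)$---already implicit in the triangulated structure cited above---is used.
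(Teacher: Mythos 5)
Your proposal is correct and takes essentially the same route as the paper, which simply invokes the triangulated structure of $\scmz(R)$ with $\syz$ as (inverse) shift and cites Happel rather than writing out details. Your explicit verification of (1) via the long exact sequence and the injectivity of free modules in the Frobenius structure, and your derivation of (2) and (3) from the autoequivalence $\syz$, are exactly the standard arguments underlying that citation.
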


On the set $\lhom_R(M,N)$, we also use the following lemma.

\begin{lem} \label{32}
	Let $M,N$ be $R$-modules having no free summands and $f\colon M\to N$ be a homomorphism factoring through a free $R$-module.
	Then the image $\im f$ of $f$ is contained in $\m N$.
\end{lem}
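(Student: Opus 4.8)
The plan is to exploit the factorization directly and reduce the statement to a computation on the components of a map into a free module. By hypothesis there is a free $R$-module $F$ together with homomorphisms $h\colon M\to F$ and $g\colon F\to N$ such that $f=g\circ h$. Since $M$ is finitely generated, the image $h(M)$ is contained in the free submodule $F'\subseteq F$ spanned by the finitely many basis elements occurring in the images of a generating set of $M$; as $F'$ is a direct summand of $F$, I may replace $F$ by $F'$ and $g$ by its restriction, so without loss of generality $F\cong R^n$ has finite rank. Writing $h=(h_1,\dots,h_n)$ with $h_i\colon M\to R$ the components of $h$ relative to the standard basis, the goal becomes to control the images of the $h_i$.

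The key step is to show that $\im h\subseteq\m F$, and for this I would use that $M$ has no free summand. Suppose some component $h_i\colon M\to R$ were surjective. Because $R$ is free, hence projective, such a surjection splits: choosing $m\in M$ with $h_i(m)=1$ and setting $s(1)=m$ gives $h_i\circ s=\mathrm{id}_R$, so $M\cong\ker h_i\oplus R$ would contain a free direct summand, contradicting the hypothesis on $M$. Therefore each $h_i$ is non-surjective, and since $R$ is local its image is a proper ideal, hence contained in $\m$. Consequently $h(m)=(h_1(m),\dots,h_n(m))\in\m^{\,n}=\m F$ for every $m\in M$, which gives $\im h\subseteq\m F$.

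It then remains to push this through $g$. Applying $g$ yields
\[
\im f = g(\im h)\subseteq g(\m F)=\m\, g(F)\subseteq \m N,
\]
as desired. I expect no serious obstacle here; the only points requiring care are the reduction to a free module of finite rank (so that the componentwise argument is legitimate) and the clean use of the no-free-summand hypothesis on $M$ to force each $h_i$ into $\m$. It is worth noting that this argument uses only that $M$ has no free summand; the hypothesis on $N$ is not needed for the inclusion $\im f\subseteq\m N$, and is presumably retained only to state the lemma in the symmetric form in which it will later be applied.
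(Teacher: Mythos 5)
Your proof is correct and follows essentially the same route as the paper: both arguments reduce to showing that the map $M\to F$ has image inside $\m F$ because $M$ has no free summand, and then push forward through $F\to N$. The paper simply asserts the inclusion $\im h\subseteq\m F$, whereas you justify it via the componentwise splitting argument; your observation that the hypothesis on $N$ is not actually used is also accurate.
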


\begin{proof}
	Write $f=hg$ where $g\colon M \to F$ and $h\colon F\to N$ are homomorphisms with a free $R$-module $F$.
	Since $M$ has no free summands, $\im g$ is contained in $\m F$.
	Hence $\im f\subseteq h(\m F)\subseteq \m N$.
\end{proof}

Recall that a homomorphism $f\colon X \to Y$ of $R$-modules is said to be \textit{irreducible} if
it is neither a split monomorphism nor a split epimorphism, and for any pair of morphisms $g$ and $h$ such that $f=gh$, either $g$ is a split epimorphism or $h$ a split monomorphism.

\begin{lem} \label{ll}
	Let $M,N$ be $R$-modules having no free summands and $f,g\colon M\to N$ be homomorphisms.
	Assume that $g$ factors through a free $R$-module.
	Then 
\begin{enumerate}[\rm(1)]
	\item $f$ is an isomorphism if and only if so is $f+g$.
	\item $f$ is a split epimorphism if and only if so is $f+g$.
	\item $f$ is a split monomorphism if and only if so is $f+g$.
	\item $f$ is irreducible if and only if so is $f+g$.
\end{enumerate} 
\end{lem}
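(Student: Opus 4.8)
The plan is to reduce all four parts to one elementary observation: if $\phi$ is an endomorphism of a finitely generated $R$-module $L$ with $\im\phi\subseteq\m L$, then $1_L+\phi$ is an isomorphism. Indeed, $1_L+\phi$ induces the identity on $L/\m L$, hence is surjective by Nakayama, hence bijective since a surjective endomorphism of a finitely generated module is injective. The role of Lemma \ref{32} is precisely to produce such maps: any homomorphism factoring through a free module, between modules without free summands, has image in $\m(-)$; and this property is stable under pre- and post-composition, because composing a factorization $M\to F\to N$ with maps on either side still factors through $F$. Throughout, the reverse implication in each biconditional will be automatic, since $f=(f+g)+(-g)$ and $-g$ again factors through a free module, so every statement is symmetric in $f$ and $f+g$.

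For (1), assuming $f$ is an isomorphism I would write $f+g=f\circ(1_M+f^{-1}g)$ and note that $f^{-1}g$ has image in $f^{-1}(\m N)=\m M$, so the observation makes $1_M+f^{-1}g$, and hence $f+g$, an isomorphism. For (2), from a section $s$ of $f$ I would compute $(f+g)s=1_N+gs$; since $gs$ still factors through a free module its image lies in $\m N$, so $1_N+gs$ is invertible and $s(1_N+gs)^{-1}$ is a section of $f+g$. Part (3) is dual: from a retraction $r$ of $f$, the endomorphism $r(f+g)=1_M+rg$ is invertible because $rg$ factors through a free module, so $(1_M+rg)^{-1}r$ retracts $f+g$.

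The main work is (4), where the idea is to merge two factorizations. Fix a factorization $-g=ba$ with $a\colon M\to F$ and $b\colon F\to N$, $F$ free. Given any factorization $f+g=qp$ with $p\colon M\to Z$ and $q\colon Z\to N$, I would form $f=(f+g)+(-g)=qp+ba=(q\ b)\binom{p}{a}$, a factorization of $f$ through $Z\oplus F$. Irreducibility of $f$ then forces $(q\ b)$ to be a split epimorphism or $\binom{p}{a}$ to be a split monomorphism. In the first case a section $\binom{s_1}{s_2}$ gives $qs_1+bs_2=1_N$ with $bs_2$ factoring through $F$, so $qs_1=1_N-bs_2$ is invertible and $q$ is a split epimorphism; in the second case a retraction $(r_1\ r_2)$ gives $r_1p+r_2a=1_M$ with $r_2a$ factoring through $F$, so $r_1p$ is invertible and $p$ is a split monomorphism. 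Combined with (2) and (3), which guarantee that $f+g$ is itself neither a split epimorphism nor a split monomorphism, this shows $f+g$ is irreducible.

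I expect the only real obstacle to be the bookkeeping in (4): one must set up the block maps $(q\ b)$ and $\binom{p}{a}$ so that their composite is exactly $f$, correctly combining the auxiliary free module $F$ with the possibly non-free $Z$. Once this block factorization is in place, each case collapses to the same invertibility of $1+(\text{a map into }\m(-))$ used in (1)--(3), so no genuinely new difficulty arises.
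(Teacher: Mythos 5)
Your proposal is correct and follows essentially the same route as the paper: both rest on Lemma \ref{32} plus Nakayama to invert maps of the form $1+(\text{something with image in }\m(-))$, and part (4) uses the identical trick of merging the factorization $g=ba$ through a free module with a factorization of $f+g$ into a single block factorization of $f$ through $Z\oplus F$, then splitting off the free component from the resulting section or retraction. The only differences are cosmetic (e.g.\ composing with $f^{-1}$ on the left in (1) rather than with the inverse on the right).
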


\begin{proof}
We only need to show one direction; we can view $f$ as $(f+g)-g$.

(1): Assume that $f$ is an isomorphism with an inverse homomorphism $h\colon N\to M$.
Then the composite homomorphisms $gh$ factor through some free $R$-modules.
It follows from Lemma \ref{32} that there are inclusions $\im gh\subseteq \m M$.
By Nakayama's lemma, we see that $(f+g)h$ is a surjective endomorphism of $M$, and hence are automorphisms.
Since $h$ is an isomorphism, it follows that $f+g$ is an isomorphism.

(2): Assume that there exists a homomorphism $s\colon N \to M$ such that $fs=\text{id}_N$.
We may apply (1) to the homomorphism $fs+gs$ to see that $(f+g)s$ is also an isomorphism.
This means that $f+g$ is a split epimorphism.
The item (3) can be checked in the same way.

(4): Assume that $f$ is irreducible.
According to the previous part, $f+g$ is neither a split monomorphism nor a split epimorphism.
By the assumption, $g$ is a composite $ba$ of homomorphisms $a\colon M\to F$ and $b\colon F\to N$ with a free $R$-module $F$.
If there is a factorization $f+g=dc$ for some homomorphisms $c\colon M\to X$ and $d\colon X\to N$, then they induce a decomposition $M \xrightarrow{{}^t[a,c]} F\oplus X \xrightarrow{[-b,d]} N$ of $f$.
By the irreducibility of $f$, either ${}^t[a,c]$ is a split monomorphism or $[-b,d]$ is a split epimorphism.
In the former case, we can take a homomorphism $[p,q]\colon F\oplus X \to N$ such that the composite $pa+qc=[p,q]\circ{}^t[a,c]$ is equal to the identity map of $N$.
Using (1), $qc$ is also an isomorphism.
This yields that $c$ is a split monomorphism.
In the latter case, we can see that $d$ is a split epimorphism by similar arguments.
Thus we conclude that $f+g$ is an irreducible homomorphism.
\end{proof}

\if0
An additive category $\mathcal{A}$ is called \textit{Krull--Schmidt category} if every object of $\mathcal{A}$ decomposes into a finite direct sum of objects having local endomorphism rings.

\begin{cor} \label{cor}
	Let $M$ be an indecomposable $R$-module such that $M_\p$ is $R_\p$-free for any nonmaximal prime ideal $\p$ of $R$.
	Then the endomorphism ring $\End_R(M)$ is local (i.e. the sum of two given nonunits is again a nonunit).
	In particular, the additive category $\scmz(R)$ is a Krull--Schmidt category.
\end{cor}

\begin{proof}
Note that $M$ is also indecomposable as an object of the stable category $\lmod(R)$ of $R$-modules.
By the assumption on $M$, $\lend_R(M)$ has finite length as an $R$-module.
Hence we have isomorphisms
\begin{equation} \label{eq}
\lend_R(M)\cong \lend_R(M)\otimes_R \widehat{R}\cong \lend_{\widehat{R}}(M\otimes_R \widehat{R}),
\end{equation}
where $\widehat{R}$ is a completion of $R$.
By Lemma \ref{ll} (1), the endomorphism ring $\End_R(M)$ (resp. $\End_{\widehat{R}}(M\otimes_R\widehat{R})$) is local if and only if $\lend_R(M)$ (resp. $\lend_{\widehat{R}}(M\otimes_R\widehat{R})$) is local.
Thus, it is enough to check that $\End_{\widehat{R}}(M\otimes_R\widehat{R})$ is local.
The isomorphisms (\ref{eq}) also show that $M\otimes_R \widehat{R}$ is indecomposable as an object of $\lmod(\widehat{R})$.
This is equivalent to saying that $M\otimes_R \widehat{R}$ is indecomposable as an $\widehat{R}$-module.
By the Krull--Schmidt property of the category of $\widehat{R}$-modules, it follows that $\End_{\widehat{R}}(M\otimes_R\widehat{R})$ is local.
\end{proof}

\fi

Let $M$ be a nonfree indecomposable module in $\cmz(R)$.
Then there exists an almost split sequence ending in $M$.
Namely, there is a nonsplit short exact sequence
\[
0 \to \tau M \xrightarrow[]{f} E_M \xrightarrow[]{g} M \to 0
\]
in $\cmz(R)$ such that $N$ is indecomposable and for any maximal Cohen--Macaulay $R$-module $L$ and a homomorphism $h\colon L \to M$ which is not a split epimorphism, $h$ factors through $g$; see \cite[Chapter 2,3]{Y} for details.
Note that an almost split sequence ending in $M$ is unique up to isomorphisms of short exact sequences.
In particular, for any nonfree indecomposable $R$-module $M$ in $\cmz(R)$, the $R$-modules $\tau M$ and $E_M$ are unique up to isomorphisms.

\begin{lem} \label{21}
Let $M$ be a nonfree indecomposable module in $\cmz(R)$.
Consider the almost split sequences
\[
0 \to \tau M \xrightarrow{f} E_M \xrightarrow{g} M \to 0, \quad 0 \to \tau(\syz M) \to E_{\syz M} \to \syz M \to 0
\]
ending in $M$ and $\syz M$.
Then $\syz (E_M)$ is isomorphic to $E_{\syz M}$ up to free summands.
\end{lem}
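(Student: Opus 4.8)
The plan is to push the almost split sequence ending in $M$ through the syzygy functor and to recognize the outcome as the almost split sequence ending in $\syz M$, up to free summands. The structural input that makes this work is that, $R$ being Gorenstein, $\scmz(R)$ is triangulated with $\syz$ as its shift; hence $\syz$ is an autoequivalence of triangulated categories, and any such functor carries Auslander--Reiten triangles to Auslander--Reiten triangles. Since an almost split sequence induces an Auslander--Reiten triangle in $\scmz(R)$, applying $\syz$ should convert the almost split sequence ending in $M$ into (the triangle of) the almost split sequence ending in $\syz M$.

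First I would record that $\syz M$ is again a nonfree indecomposable object of $\cmz(R)$, so that it admits an almost split sequence of its own: localization at a nonmaximal prime keeps $\syz M$ in $\cmz(R)$; minimality of the free resolution forces $\syz M$ to have no free summand; $\syz M$ cannot be free, as otherwise $M$ would have finite projective dimension and hence be free; and indecomposability passes through the equivalence $\syz$ on $\scmz(R)$ together with the Krull--Schmidt property, available because $R$ is henselian. Next, applying the horseshoe lemma to $0 \to \tau M \xrightarrow{f} E_M \xrightarrow{g} M \to 0$ (using free covers) yields a short exact sequence
\[
0 \to \syz\tau M \to K \to \syz M \to 0,
\]
in which the middle term $K$ coincides with $\syz E_M$ up to free summands.

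In $\scmz(R)$ this short exact sequence represents the triangle obtained by applying the shift $\syz$ to the distinguished triangle $\tau M \to E_M \to M \to \syz\tau M$ attached to the original almost split sequence. As the latter is an Auslander--Reiten triangle and $\syz$ preserves such triangles, the displayed sequence is an Auslander--Reiten triangle ending in $\syz M$. By uniqueness of Auslander--Reiten triangles ending in a given indecomposable object -- equivalently, by the uniqueness of almost split sequences already noted -- it is isomorphic in $\scmz(R)$ to the triangle coming from the almost split sequence $0 \to \tau(\syz M) \to E_{\syz M} \to \syz M \to 0$. Comparing middle terms gives $\syz E_M \cong E_{\syz M}$ in $\scmz(R)$, which is precisely the statement that $\syz E_M$ and $E_{\syz M}$ agree up to free summands.

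The crux is justifying that $\syz$ transports the almost split sequence to an almost split sequence; equivalently, that the map $K \to \syz M$ becomes right almost split after deleting free summands, so that every non-split epimorphism $L \to \syz M$ factors through it. This is where the earlier lemmas are meant to enter: Lemma \ref{l31}(3) identifies the pertinent stable Hom groups across $\syz$, allowing one to transfer the factorization property of $g\colon E_M \to M$ to the new map, while Lemma \ref{ll} ensures that the notions entering the definition -- being (or failing to be) a split epimorphism, and being irreducible -- are insensitive to the free-module ambiguity arising when one passes between genuine module homomorphisms and their stable classes. Carrying out this transfer carefully, so that the minimal right almost split map onto $\syz M$ is recovered up to free summands, is the main obstacle; the triangulated formalism above is essentially the bookkeeping device that organizes exactly this verification.
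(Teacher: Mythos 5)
Your strategy coincides with the paper's: apply the horseshoe lemma to obtain $0\to\syz(\tau M)\xrightarrow{f'}\syz E_M\oplus P\xrightarrow{g'}\syz M\to 0$ with $P$ free, and then identify this sequence as the almost split sequence ending in $\syz M$ by exploiting the autoequivalence $\syz$ of $\scmz(R)$. The difference is one of execution: where you invoke the general principle that a triangle autoequivalence carries Auslander--Reiten triangles to Auslander--Reiten triangles and then concede that ``carrying out this transfer carefully\dots is the main obstacle,'' the paper actually performs the two short verifications -- and in this setting that general principle essentially \emph{is} the statement being proved, so leaving it unverified is the one real gap in your writeup. The missing steps are brief. (i) Non-splitness: by Lemma \ref{ll}(2), $g'$ is a split epimorphism if and only if its stable class $\underline{g'}=\syz(\underline{g})$ admits a right inverse in $\scmz(R)$, if and only if $\underline{g}$ does (via the equivalence $\syz$), if and only if $g$ is a split epimorphism -- which it is not. (ii) Right almost split: given $h'\colon X\to\syz M$ that is not a split epimorphism, the same transfer shows $\underline{h'}$ factors through $\underline{g'}$ in $\scmz(R)$, i.e.\ $h'=g'p+rq$ at the level of modules, with $q\colon X\to F$, $r\colon F\to\syz M$ and $F$ free; since $g'$ is a surjection of modules and $F$ is projective, $r$ lifts along $g'$, so $h'$ factors through $g'$ honestly and not merely stably. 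This last lifting step (projectivity of $F$ plus surjectivity of $g'$) is exactly the point your appeal to Lemma \ref{l31}(3) and Lemma \ref{ll} does not by itself supply; with these few lines added, your argument is the paper's argument.
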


\begin{proof}
By horseshoe lemma, there exists a short exact sequence $s\colon 0 \to \syz(\tau M) \xrightarrow{f'} \syz E_M \oplus P \xrightarrow{g'} \syz M \to 0$ with some free $R$-module $P$.
Here, the class $\underline{g'}\in \lhom_R(\syz E_M,\syz M)$ of $g'$ coincides with the image $\syz(\underline{g})$ of the class $\underline{g}$ of $g$ under the isomorphism $\syz\colon \lhom_R(E_M,M)\to \lhom_R(\syz E_M,\syz M)$ in Lemma \ref{l31}.
We want to show that the sequence $s$ is an almost split sequence ending in $\syz M$.
By Lemma \ref{ll} (2), we see that $g'$ is a split epimorphism if and only if $\underline{g'}h=\text{id}$ for some $h$ in the category $\scmz(R)$.
In view of the equivalence $\syz\colon \scmz(R)\to \scmz(R)$, $g'$ as well as $g$ is not a split surjection.
This means that $s$ is not a split exact sequence.

We fix a homomorphism $h'\colon X\to \syz M$ which is not a split epimorphism.
We can use the equivalence $\syz\colon \scmz(R)\to \scmz(R)$ again to obtain an equality $h'=g'p+rq$ with some homomorphism $p\colon X \to \syz E_M$, $q\colon X\to F$, $r\colon F\to \syz M$, where $F$ is a free module.
As $g'$ is an epimorphism and $F$ is free, $r$ factors through $g$.
This shows that $h'=g't$ for some $t\colon X\to \syz E_M$.
Consequently, $s$ is an almost split sequence ending in $\syz M$.
\end{proof}

Consider the almost split sequence
	\[
	0 \to \tau(M) \to E_M \to M \to 0
	\]
ended in $M$.
We define a number $\alpha(M)$ to be the number of nonfree indecomposable summand of $E_M$.

\begin{lem} \label{22}
Let $M$ be a nonfree indecomposable module in $\cmz(R)$.
Then $\alpha(M)=\alpha(\syz^i M)$ for all $i\ge 0$.
\end{lem}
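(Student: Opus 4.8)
The plan is to reduce everything to the single step $\alpha(M)=\alpha(\syz M)$ and then iterate. Since $\syz^i M=\syz(\syz^{i-1}M)$ and each $\syz^{j}M$ is again a nonfree indecomposable module in $\cmz(R)$, a straightforward induction on $i$ shows that it suffices to prove $\alpha(M)=\alpha(\syz M)$. The engine driving this equality will be Lemma \ref{21}, which identifies $\syz(E_M)$ with $E_{\syz M}$ up to free summands, i.e.\ as objects of $\scmz(R)$.

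The conceptual heart of the argument is to reinterpret the number $\alpha$ inside the stable category. Because $R$ is henselian, both $\cmz(R)$ and $\scmz(R)$ are Krull--Schmidt, so direct-sum decompositions into indecomposables are unique. Free summands of a module become zero in $\scmz(R)$, and an object that is indecomposable in $\scmz(R)$ contributes exactly one nonfree indecomposable module summand. Hence for \emph{any} $Y\in\cmz(R)$ the number of nonfree indecomposable module summands of $Y$ equals the number of indecomposable summands of $Y$ viewed as an object of $\scmz(R)$. In particular $\alpha(M)$ is precisely the number of indecomposable summands of $E_M$ in $\scmz(R)$, and likewise $\alpha(\syz M)$ is the number of indecomposable summands of $E_{\syz M}$ in $\scmz(R)$.

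With this reformulation the proof becomes transport along an equivalence. The syzygy functor $\syz$ is the shift of the triangulated category $\scmz(R)$, so it is an auto-equivalence; an equivalence carries a decomposition into indecomposables to a decomposition into the same number of indecomposables. Applying $\syz$ to a decomposition of $E_M$ in $\scmz(R)$ therefore shows that $\syz(E_M)$ has exactly $\alpha(M)$ indecomposable summands in $\scmz(R)$. By Lemma \ref{21}, $\syz(E_M)\cong E_{\syz M}$ up to free summands, which means they are isomorphic in $\scmz(R)$; hence $E_{\syz M}$ also has $\alpha(M)$ indecomposable summands in $\scmz(R)$. Reading this back at the level of modules gives $\alpha(\syz M)=\alpha(M)$, which closes the induction.

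The step I expect to be the main obstacle is the bookkeeping in the second paragraph: one must verify that the passage from $\cmz(R)$ to $\scmz(R)$ neither fuses two distinct nonfree indecomposable summands into one stable object nor silently alters the count when discarding free summands. This is exactly where the Krull--Schmidt property (equivalently, the locality of $\End_R(X)$ for indecomposable $X$, available since $R$ is henselian) is indispensable, together with the fact that two modules without free summands are isomorphic in $\scmz(R)$ precisely when they are isomorphic as modules. Once these facts are in place, the invocation of the auto-equivalence $\syz$ and of Lemma \ref{21} is routine.
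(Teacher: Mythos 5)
Your proof is correct and follows the same route as the paper, which simply declares the lemma a direct consequence of Lemma \ref{21}; you have supplied the Krull--Schmidt bookkeeping in $\scmz(R)$ that the paper leaves implicit. The key points you flag (locality of stable endomorphism rings over a henselian ring, and the fact that modules without free summands are stably isomorphic iff isomorphic, which is Lemma \ref{ll}(1)) are exactly the right ones.
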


\begin{proof}
This is a direct consequence of Lemma \ref{21}.
\end{proof}

The following two lemmas play key roles in the next section.
See \cite[Lemma 4.1.8]{R2} for details of the lemma below.

\begin{lem} \label{l4}
	Let $f\colon M\to N$ be an irreducible homomorphism such that $M$ and $N$ are indecomposable in $\cmz(R)$.
	Assume that $\dim R= 1$.
	Then $f$ is either injective or surjective.
\end{lem}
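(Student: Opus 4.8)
The plan is to factor $f$ through its image and then feed this factorization into the defining property of an irreducible morphism. Put $I=\im f$ and write $f$ as the composite
\[
M \xrightarrow{\ \pi\ } I \xrightarrow{\ \iota\ } N ,
\]
where $\pi$ is the surjection onto the image induced by $f$ and $\iota$ is the injective inclusion. Granting for the moment that $I$ lies in $\cmz(R)$, this is a factorization of $f$ through an object of the category in which $f$ is irreducible, so either $\iota$ is a split epimorphism or $\pi$ is a split monomorphism. A split monomorphism is injective and $\pi$ is already surjective, so in the first alternative $\pi$ is an isomorphism and $f=\iota\pi$ is injective; dually, a split epimorphism is surjective and $\iota$ is already injective, so in the second alternative $\iota$ is an isomorphism and $f=\iota\pi$ is surjective. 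In either case $f$ is injective or surjective.

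It remains to justify that $I\in\cmz(R)$, which is the only place where $\dim R=1$ enters. Since $N$ is maximal Cohen--Macaulay, $\depth N\ge 1$, so there is an $x\in\m$ that is a nonzerodivisor on $N$; the same $x$ is then a nonzerodivisor on the submodule $I\subseteq N$, whence $\depth I\ge 1=\dim R$ and $I$ is maximal Cohen--Macaulay. For the local-freeness requirement, note that $R_\p$ is a field for every minimal prime $\p$ (as it is whenever $R$ is reduced, in particular for the domains of interest), so $I_\p=\im(f_\p)$ is a subspace of the free module $N_\p$ and is therefore $R_\p$-free. Thus $I$ meets both conditions for membership in $\cmz(R)$, and $\pi,\iota$ are genuine morphisms of $\cmz(R)$ to which the dichotomy above applies.

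I expect the maximal Cohen--Macaulayness of $I$ to be the crucial and least formal step, and it is exactly here that dimension one is indispensable: a submodule of a maximal Cohen--Macaulay module inherits the nonzerodivisor and hence stays maximal Cohen--Macaulay only when $\dim R=1$, whereas in higher dimension the image of a map of maximal Cohen--Macaulay modules can have smaller depth and leave $\cmz(R)$, so no such in-category factorization of $f$ would be available. Once $I\in\cmz(R)$ is secured, the remainder is purely formal, resting only on the fact that a surjection which is a split monomorphism, or an injection which is a split epimorphism, must be an isomorphism.
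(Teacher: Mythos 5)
Your argument is correct, and it is essentially the only argument available: the paper gives no proof of this lemma, deferring entirely to \cite[Lemma 4.1.8]{R2}, and the factor-through-the-image strategy you use is the standard one carried out there. Two small remarks. Your two alternatives are announced in one order (``either $\iota$ is a split epimorphism or $\pi$ is a split monomorphism'') and then resolved in the opposite order as ``first'' and ``second,'' which reads confusingly, though the logic in each case is sound: a surjection that splits as a monomorphism, or an injection that splits as an epimorphism, is an isomorphism. More substantively, the standing hypotheses of Section 2 make $R$ only a Gorenstein henselian local ring, so your claim that $R_\p$ is a field at the minimal primes genuinely requires $R$ to be reduced; you flag this yourself, and since the lemma is invoked only in the proof of the main theorem, where $R$ is a domain, the restriction costs nothing here, but in the non-reduced case one would have to verify separately that $\im(f_\p)$ is free over the artinian local ring $R_\p$, which is not automatic. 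You should also note in passing that $\im f\neq 0$ (the zero map between nonzero modules is never irreducible), since the depth argument for $\im f$ being maximal Cohen--Macaulay needs this.
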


Recall that an $R$-module $M$ has \textit{constant rank} $n$ if one has an isomorphism $M_\p\cong R_\p^{\oplus n}$ for all associated primes $\p$ of $R$.

\begin{lem} \label{27}
	Let $M,N$ be nonfree indecomposable modules in $\cmz(R)$ having same constant rank.
	Let $f\colon M\to N$ be an irreducible monomorphism.
	Assume that $\dim R=1$.
	Then $\cok f$ is isomorphic to $R/\m$.
\end{lem}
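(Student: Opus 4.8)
The plan is to analyze the cokernel $C=\cok f$ directly and show it is a simple module. First I would observe that since $M$ and $N$ share the same constant rank and $f$ is injective, localizing the short exact sequence $0\to M\xrightarrow{f} N\to C\to 0$ at the prime $(0)$ turns $f$ into an injective map of $R_{(0)}$-vector spaces of equal dimension, hence an isomorphism; thus $C_{(0)}=0$ and $C$ is torsion. As $R$ is a one-dimensional local domain, its only primes are $(0)$ and $\m$, so $C$ is supported only at $\m$ and therefore has finite length. Moreover $f$ is irreducible, hence not an isomorphism, so the injection $f$ cannot be surjective and $C\neq 0$. The whole statement then reduces to proving that $C$ has length one, i.e. that $C$ is simple, since the unique simple $R$-module is $R/\m$.

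To prove simplicity I would argue by contradiction: suppose $C$ admits a submodule $C'$ with $0\subsetneq C'\subsetneq C$. Writing $\pi\colon N\to C$ for the projection, set $N'=\pi^{-1}(C')$, so that $\im f=\ker\pi\subseteq N'\subseteq N$. Being a submodule of the torsion-free module $N$, the module $N'$ is itself torsion-free, hence maximal Cohen--Macaulay, and so lies in $\cmz(R)$. This intermediate module yields a factorization $f=\iota\circ f'$, where $f'\colon M\to N'$ is $f$ with restricted target and $\iota\colon N'\hookrightarrow N$ is the inclusion; one checks directly that $\cok f'\cong C'$ and $\cok\iota\cong C/C'$. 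Applying the definition of irreducibility to this factorization, either $f'$ is a split monomorphism or $\iota$ is a split epimorphism.

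Each alternative I expect to collapse by a torsion argument. If $f'$ is a split monomorphism, then $N'\cong M\oplus\cok f'\cong M\oplus C'$; but $N'$ is torsion-free while $C'\neq 0$ is torsion, so $N'$ would contain a nonzero torsion summand, which is impossible. If instead $\iota$ is a split epimorphism, then $\iota$ is simultaneously injective (an inclusion) and surjective, hence an isomorphism, forcing $N'=N$ and so $C'=C$. Either conclusion contradicts $0\subsetneq C'\subsetneq C$, so no such $C'$ exists and $C$ is simple.

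The genuine content, and the step I expect to be the crux, is the passage from the internal submodule structure of the finite-length module $C$ to honest factorizations of $f$ to which irreducibility can be applied. The delicate points are verifying that the intermediate module $N'$ stays torsion-free, so that the split-monomorphism branch is killed by the absence of torsion summands in $N$, and recognizing that an injective split epimorphism must be an isomorphism. Once these are in place, irreducibility leaves $C$ no room for a proper nonzero submodule, and finite length over the local domain $R$ then pins it down as $R/\m$.
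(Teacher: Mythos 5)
Your proof is correct and follows essentially the same route as the paper: both arguments factor $f$ through an intermediate submodule $N'$ with $\im f\subseteq N'\subsetneq N$ lying in $\cmz(R)$, apply the definition of irreducibility, kill the split-epimorphism branch because a proper inclusion cannot be surjective, and handle the split-monomorphism branch by noting that a torsion-free module admits no nonzero finite-length summand --- the paper simply chooses $N'$ to be a maximal proper submodule containing $\im f$ (so the quotient is $R/\m$ by construction and the split mono is an isomorphism by equality of ranks), whereas you argue by contradiction that $\cok f$ is simple. One minor caveat: you invoke that $R$ is a domain (localizing at $(0)$ and speaking of torsion), which holds in the paper's application but is not among the hypotheses of Section 2, where $R$ is only assumed Gorenstein henselian local; the paper's use of constant rank at associated primes avoids this, and your argument is repaired the same way.
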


\begin{proof}
	By the assumption that $f$ is an irreducible monomorphism, $f$ is not surjective.
	Hence we can take a maximal proper submodule $X$ of $N$ containing $\im f$.
	Remark that the quotient $N/X$ is isomorphic to $R/\m$ and hence $X$ and $N$ has same constant rank.
	Since $\dim R=1$, $X$ is an $R$-module contained in $\cmz(R)$.
	Thus we have a factorization $M\to X \to N$ of $f$ in $\cmz(R)$.
	By the irreducibility of $f$, it follows that either $M \to X$ is a split monomorphism or $X \to N$ is a split epimorphism.
	As $X$ is proper submodule of $N$, the later case cannot occur.
	Therefore, we obtain a split monomorphism $g\colon M \to X$.
	Then, by the equalities $\rank M=\rank N=\rank X$, $g$ is an isomorphism.
	This implies the desired isomorphisms $\cok f\cong N/X\cong R/\m$.
\end{proof}

\section{Proof of the main theorem}

In this section, we give a proof of Theorem \ref{thm}.

\begin{proof}[Proof of Theorem \ref{thm}]
	Since $R$ is a Gorenstein local ring of dimension one, $\tau(N)$ is isomorphic to $\syz N$ for any nonfree indecomposable $R$-module $N$ in $\cmz(R)$.
	We assume that $M$ is a nonfree indecomposable $R$-module in $\cmz(R)$ satisfying $\Ext^1_R(M,M)=0$ and want to show that $\alpha(M)=1$.
	We see from Lemma \ref{l31} that the isomorphisms
	\[
\Ext^1_R(\syz^{i+1} M,\syz^{i+1} M)\cong \Ext^1_R(M,M)=0
	\]
	hold for all $i\ge 0$.
	If $E_{\syz^i M}$ has a free summand, then $\tau(\syz^i M)=\syz^{i+1} M$ has an irreducible homomorphism into $R$.
	Hence $\syz^{i+1} M$ is a direct summand of the maximal ideal $\m$.
	Since $R$ is a domain, this means that $\syz^{i+1} M$ is isomorphic to $\m$.
	It follows that $\Ext^1_R(\m,\m)$ is zero, and so $R$ should be regular.
	Therefore, we may assume that $E_{\syz^i M}$ has no free summands for all $i\ge 0$.
	By lemma \ref{22}, it is enough to show that $\alpha(\syz^i M)=1$ for some $i \ge 0$.
	Thus by replacing $M$ with $\syz^i M$, we may assume that $\rank M$ is minimal in the set $\{\rank \syz^i M\mid i\ge 0\}$.
	
	Decompose $E_M=E_1\oplus\cdots\oplus E_n$ as a direct sum of indecomposable modules and consider the almost split sequence
	\[
	0 \to \syz M \xrightarrow[]{f={}^t(f_1,\dots,f_n)} E_1\oplus\cdots\oplus E_n \xrightarrow[]{g=(g_1,\dots,g_n)} M \to 0
		\]
	ended in $M$, where $f_p\colon \syz M\to E_p$ and $g_p\colon E_p\to N$ are irreducible homomorphisms and $n=\alpha(M)$.
	Lemma \ref{l4} guarantees that each of $f_1,\dots,f_n$ and $g_1,\dots,g_n$ is either injective or surjective.
	
\begin{claim}
	There is a number $p$ such that $f_p$ is injective.
\end{claim}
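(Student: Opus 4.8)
The plan is to prove the contrapositive by reformulating injectivity of $f_p$ as a rank condition and then arguing by contradiction. Since $R$ is a domain, $M$, $\syz M$, and each $E_p$ have constant rank, and by Lemma \ref{l4} each irreducible $f_p\colon\syz M\to E_p$ is either injective or surjective. The first move is to record the clean dichotomy: an irreducible \emph{surjection} between indecomposables of \emph{equal} constant rank would have torsion-free kernel of rank zero, hence be an isomorphism, contradicting irreducibility; so $f_p$ surjective forces the strict inequality $\rank E_p<\rank\syz M$, while $f_p$ injective forces $\rank E_p\ge\rank\syz M$. Consequently $f_p$ is injective if and only if $\rank E_p\ge\rank\syz M$, and the Claim is equivalent to the assertion that $\rank E_p\ge\rank\syz M$ for at least one $p$.

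Assume, for contradiction, that $\rank E_p<\rank\syz M$ for every $p$, i.e. that all $f_p$ are surjective. Taking ranks in the almost split sequence gives $\sum_{p=1}^{n}\rank E_p=\rank\syz M+\rank M$, and the minimality of $\rank M$ among the ranks of the syzygies yields $\rank M\le\rank\syz M$. This already settles the case $\rank\syz M=1$: then each $\rank E_p$ would have to be $0$, impossible since the $E_p$ are nonfree. For $\rank\syz M\ge2$, however, these numerical constraints are consistent when $n\ge2$, so the hypothesis $\Ext^1_R(M,M)=0$ must enter. To use it, I would apply $\Hom_R(M,-)$ to the almost split sequence: since $\Ext^1_R(M,M)=0$, the long exact sequence forces the map $f_*\colon\Ext^1_R(M,\syz M)\to\Ext^1_R(M,E_M)$ to be surjective. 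Transporting this through the natural isomorphisms $\Ext^1_R(M,-)\cong\lhom_R(\syz M,-)$ of Lemma \ref{l31}, post-composition with the classes $\underline{f_p}$ yields a surjection $\lend_R(\syz M)\twoheadrightarrow\bigoplus_{p=1}^{n}\lhom_R(\syz M,E_p)$. Each summand is nonzero, because an irreducible map cannot factor through a free module: otherwise, by the definition of irreducibility together with Lemma \ref{32}, either $\syz M$ or $E_p$ would be free, contradicting the nonfreeness of $M$ and the absence of free summands in $E_M$.

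The main obstacle is the final step: converting the all-surjective configuration into an honest contradiction, and this is where the minimality of $\rank M$ must do essential work. Indeed the dual almost split sequence shows that, without minimality, the analogous statement fails for $(\syz M)^*$, so no purely homological or rank-counting argument can suffice on its own. The idea is to feed the surjection $\lend_R(\syz M)\twoheadrightarrow\bigoplus_p\lhom_R(\syz M,E_p)$, whose summands are finite-length modules since the modules lie in $\cmz(R)$, against a length bookkeeping and to propagate the nonzero torsion-free kernels $\ker f_p$ along the syzygy operator via Lemma \ref{21} (which identifies $\syz E_M$ with $E_{\syz M}$ up to free summands); I expect this to produce a syzygy $\syz^i M$ of rank strictly below $\rank M$, contradicting minimality, with Lemma \ref{27} supplying the precise colength data (cokernels isomorphic to $R/\m$) needed to make the count exact. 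Pinning down this interaction between the homological surjection, the rank relation, and the minimality is the technical heart of the argument.
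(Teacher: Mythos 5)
Your argument is not complete: the last paragraph explicitly defers ``the technical heart'' (converting the all-surjective configuration into a contradiction) to an unspecified interaction between the homological surjection, the rank relation, and the minimality of $\rank M$. Nothing in the proposal actually derives a contradiction once $\rank\syz M\ge 2$, so as written this is a genuine gap, not a proof. Moreover, the route you are steering toward is misdirected on two counts. First, minimality of $\rank M$ is not needed for this Claim at all (it is used only later in the paper, to handle the case where $g_p$ is injective); your meta-claim that ``no purely homological or rank-counting argument can suffice'' is therefore not a reliable guide. Second, the surjection $\lend_R(\syz M)\twoheadrightarrow\bigoplus_p\lhom_R(\syz M,E_p)$ extracted from the long exact sequence is a weaker and less usable consequence of $\Ext^1_R(M,M)=0$ than the one that actually closes the argument, namely the vanishing $\lhom_R(\syz M,M)\cong\Ext^1_R(M,M)=0$ from Lemma \ref{l31}.

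The missing idea is to look at the composites $g_if_i\colon\syz M\to M$. If every $f_i$ were surjective, then $\im g_i=g_i(E_i)=g_i(f_i(\syz M))=\im(g_if_i)$ for each $i$. But each $g_if_i$ lies in $\Hom_R(\syz M,M)$ and is zero in $\lhom_R(\syz M,M)\cong\Ext^1_R(M,M)=0$, i.e.\ it factors through a free module; since $\syz M$ and $M$ have no free summands, Lemma \ref{32} gives $\im(g_if_i)\subseteq\m M$. Summing over $i$ yields $\im g=\sum_i\im g_i\subseteq\m M$, contradicting the surjectivity of $g$. This short Nakayama-style argument replaces your entire rank/length bookkeeping; the rank dichotomy you set up at the start (surjective forces $\rank E_p<\rank\syz M$, injective forces $\rank E_p\ge\rank\syz M$) is correct but is not needed here.
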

	
\noindent \textit{Proof of Claim 1}. Suppose that all of the $f_1,\dots,f_n$ are surjective.
Then we get equalities $\im g=\sum_i\im g_i=\sum_i\im g_if_i$.
Since $\lhom_R(\syz M,M)=0$ (Lemma \ref{l31}), it follows from Lemma \ref{32} that $\im g_if_i\subseteq \m M$ for all $i=1,\dots,n$.
This yields that $\im g\subseteq \m M$, which contradicts to that $g$ is surjective. $\square$
\begin{claim}
	If there is a number $p$ such that $f_p$ is injective and $g_p$ is surjective,
	then $\alpha(M)=1$.
\end{claim}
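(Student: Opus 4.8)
The plan is to argue throughout under the standing hypothesis $\Ext^1_R(M,M)=0$, which by Lemma \ref{l31} is equivalent to $\lhom_R(\syz M,M)=0$; hence by Lemma \ref{32} every composite $g_if_i\colon \syz M\to M$ has image inside $\m M$. I would write $E_M=E_p\oplus E'$ with $E'=\bigoplus_{i\neq p}E_i$, and correspondingly $f={}^t(f_p,f')$ and $g=(g_p,g')$. The goal $\alpha(M)=1$ is exactly $E'=0$, and the strategy is to extract from the two hypotheses enough rigidity to force this vanishing. The pivotal observation is that surjectivity of $g_p$ propagates to the complementary map $f'$.

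First I would record the structural consequences. Since $g_p(E_p)=M$ and $\ker g=f(\syz M)$, every $x\in E_M$ agrees with some element of $E_p$ modulo $\ker g$, so $E_M=E_p+f(\syz M)$; projecting onto $E'$ annihilates $E_p$ and yields that $f'\colon \syz M\to E'$ is \emph{surjective}. On the other hand $\ker g'=(\ker g)\cap E'=\{(0,b)\in f(\syz M)\}$ is zero because $f_p$ is injective, so $g'\colon E'\to M$ is \emph{injective}; and from $gf=0$ one gets $g_pf_p=-g'f'$, whence $\im(g_pf_p)=g'(E')\cong E'$ and $\ker(g_pf_p)=\ker f'$. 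This already gives the clean reformulation I would aim for: because $f'$ is surjective, $g_pf_p=0$ holds if and only if $E'=0$. Thus \emph{the claim is equivalent to the vanishing $g_pf_p=0$}, i.e.\ to upgrading the inclusion $\im(g_pf_p)=g'(E')\subseteq \m M$ to honest vanishing. As a first concrete constraint I would rule out $\rank\syz M=\rank E_p$: in that case Lemma \ref{27} gives $\cok f_p\cong R/\m$, while $g_p$ induces a surjection $\cok f_p\twoheadrightarrow M/\m M$ (using $\im g_pf_p\subseteq \m M$), forcing $M$ to be cyclic and hence free over the domain $R$, a contradiction. So $\rank E_p>\rank\syz M$, and combined with $\rank E'\le \rank\syz M$ (from $f'$ surjective) and the rank identity $\sum_i\rank E_i=\rank\syz M+\rank M$ this yields $\rank E'\le \rank M-1$.

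The step I expect to be the genuine obstacle is the vanishing $g_pf_p=0$ itself, because the rank bookkeeping above does not settle it: the inequalities are consistent with a nonzero $E'$ (they are satisfied, e.g., by $n=2$ with $\rank\syz M=\rank M=r\ge 2$, $\rank E'=1$ and $\rank E_p=2r-1$), so a non-numerical input is unavoidable. Here I would bring in the minimality of $\rank M$ in $\{\rank\syz^iM\mid i\ge0\}$ arranged at the outset, applied to the submodule $\ker f'\subseteq\syz M$ appearing in $0\to\ker f'\to\syz M\xrightarrow{f'}E'\to0$: its rank is $\rank\syz M-\rank E'$, which can fall below $\rank M$ once $E'$ is nonzero, and my aim would be to realise $\ker f'$ (up to free summands, via the syzygy comparison of Lemma \ref{21}) as a syzygy of $M$ and thereby contradict minimality unless $E'=0$. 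The delicate point, and where the main effort lies, is that $E_p$ is not free, so the naive identification of $\ker f'$ (equivalently of $\ker g_p$, to which it is isomorphic via $f_p$) with an honest syzygy of $M$ breaks down; controlling the free summands introduced by this non-freeness well enough to keep the rank strictly below $\rank M$ is the crux of the argument.
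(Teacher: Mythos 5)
Your structural setup is correct and even matches computations implicit in the paper: $f'\colon\syz M\to E'$ is surjective, $g'\colon E'\to M$ is injective, $\ker g_p=f_p(\ker f')\cong\ker f'$, and the claim is indeed equivalent to $g_pf_p=0$. Your rank bookkeeping ($\rank E_p>\rank\syz M$, $\rank E'\le\rank M-1$) is also sound. But as you yourself acknowledge, none of this proves the claim: the core step --- forcing $E'=0$ --- is left as an ``aim'' whose proposed route (minimality of $\rank M$ plus realising $\ker f'$ as a syzygy of $M$) is not carried out, and you correctly identify that the non-freeness of $E_p$ obstructs that identification. So the proposal has a genuine gap precisely at the point where the work has to happen.

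The idea you are missing is to use the \emph{full} strength of $\lhom_R(\syz M,M)=0$, not merely its consequence $\im(g_pf_p)\subseteq\m M$ via Lemma \ref{32}. The vanishing gives an honest factorization $g_pf_p=ba$ with $a\colon\syz M\to F$, $b\colon F\to M$ and $F$ free; since $g_p$ is surjective and $F$ is projective, $b$ lifts to $c\colon F\to E_p$ with $b=g_pc$, whence $g_p(f_p-ca)=0$ and $f_p-ca$ factors as $ed$ through the inclusion $e\colon\ker g_p\to E_p$. Since $ca$ factors through the free module $F$, Lemma \ref{ll} says $f_p-ca$ is still irreducible, so either $e$ is a split epimorphism (impossible, as it would force $g_p=0$) or $d\colon\syz M\to\ker g_p$ is a split monomorphism. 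The latter gives $\rank\syz M\le\rank\ker g_p=\rank E_p-\rank M$, i.e.\ $\rank E_p\ge\rank\syz M+\rank M=\rank E_M$, which annihilates $E'$ and yields $\alpha(M)=1$. Note that in your notation this says exactly $\rank\syz M\le\rank\ker f'=\rank\syz M-\rank E'$, i.e.\ $\rank E'\le0$ --- the perturbation-by-a-free-factor trick supplies the ``non-numerical input'' you observed was unavoidable, and the minimality of $\rank M$ is not needed for this claim at all (the paper uses it only later, in the case where $g_p$ is injective).
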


\noindent \textit{Proof of Claim 2}. Suppose that $f_p$ is injective and $g_p$ is surjective.
Since $\lhom_R(\syz M,M)=0$, there is a free $R$-module $F$ and homomorphisms $a\colon \syz M \to F$ and $b\colon F\to M$ such that $g_pf_p=ba$.
Since $F$ is free and $g_p$ is surjective, we have a factorization $b=g_pc$ with some homomorphism $c\colon F\to E_p$.
So we get an equality $g_p(f_p-ca)=0$.
In particular, $f_p-ca$ factors through the kernel $\ker g_p$ of $g_p$, i.e. $f_p-ca=ed$ with a homomorphism $d\colon \syz M\to \ker g_p$ and the natural inclusion $e\colon \ker g_p \to E_p$.
By Lemma \ref{ll} (2), the homomorphism $f_p-ca\colon M\to E_p$ is also irreducible.
Hence either $e$ is a split epimorphism or $d$ is a split monomorphism.
In the former case, the equality $\ker g_p=E_p$ follows.
It means that the map $g_p$ is zero.
This is a contradiction to the irreducibility of $g_p$.
So it follows that $d$ is a split monomorphism.
Then one has $\rank \syz M \le \rank \ker g_p=\rank E_p - \rank M$.
This forces that $n=1$.
$\square$

By Claim 1, we already have an integer $p$ such that $f_p$ is a monomorphism.
If $g_p$ is surjective, then by Claim 2 it follows that $\alpha(M)=1$.
Therefore, we may suppose that $g_p$ is injective.
Then the inequalities $\rank \syz M \le \rank E_p \le\rank M$ hold.
By the minimality of $\rank M$, we have $\rank \syz M=\rank E_p=\rank M$.
In this case, we see isomorphisms $\cok f_p\cong R/\m\cong \cok g_p$ by Lemma \ref{27}.
Therefore, equalities $\ell(M/\im(f_pg_p))=\ell(\cok f_p)+\ell(\cok g_p)=2$ hold (here, $\ell(X)$ denotes the length for an $R$-module $X$).
By Lemma \ref{32}, $\im (f_pg_p)\subseteq \m M$.
So it follows that $\ell(M/\m M)\le 2$.
In other words, $M$ is generated by two elements as an $R$-module.
Since $M$ is nonfree, one has $\rank M=1$ and $\Hom_R(M,R)\cong \syz M$.
As $\rank \syz M=\rank M=1$, we can apply the same argument above for $\syz M$ to see that $\syz M$ is also generated by two elements.
Then by \cite[Theorem 3.2]{He}, one can see that $\Ext^1_R(M,M)\not=0$, a contradiction.
\end{proof}



\begin{thebibliography}{99}
\bibitem{C}
{\sc O. Celikbas}, Vanishing of Tor over complete intersections, {\em J.  Commut. Algebra} {\bf 3} (2011),169--206.
\bibitem{CGTT}
{\sc O. Celikbas; S. Goto; R. Takahashi; N. Taniguchi}, On the ideal case of a conjecture of Huneke and Wiegand, to appear in {\em Proc. Edinb. Math. Soc. (2)}.
\bibitem{CT}
{\sc O. Celikbas; R. Takahashi}, Auslander--Reiten conjecture and Auslander--Reiten duality, {\em J. Algebra} {\bf 382} (2013), 100--114.
\bibitem{GL}
{\sc P. A. Garc\'{i}a-S\'{a}nchez; M. J. Leamer}, Huneke-Wiegand conjecture for complete intersection numerical semigroup rings, {\em J. Algebra} {\bf 391} (2013), 114--124.
\bibitem{GTTT}
{\sc S. Goto; R. Takahashi; N. Taniguchi; H. Le Truong}, Huneke-Wiegand conjecture and changeof rings, {\em J. Algebra} {\bf 422} (2015), 33--52.
\bibitem{Ha}
{\sc D. Happel}, Triangulated categories in the representation theory of finite-dimensional algebras, London Mathematical Society Lecture Note Series {\bf 119}. Cambridge University Press, Cambridge, 1988. x+208 pp.
\bibitem{He}
{\sc K. Herzinger}, The number of generators for an ideal and its dual in a numerical semigroup, {\em Comm. Algebra} {\bf 27} (1999), 4673--4690.
\bibitem{HIW}
{\sc C. Huneke; S. Iyengar; R. Wiegand}, Rigid Ideals in Gorenstein Rings of Dimension One. {\em Acta Mathematica Vietnamica} (2018), 1--19.
\bibitem{HJ}
{\sc C. Huneke; D. A. Jorgensen}, Symmetry in the vanishingof Ext over Gorenstein rings, {\em Math. Scand.} {\bf 93} (2003), 161--184.
\bibitem{HW}
{\sc C. Huneke; R. Wiegand}, Tensor products of modules and the rigidity of Tor,  {\em Math. Ann.} {\bf 299} (1994), no. 3,  449--476.
\bibitem{R2}
{\sc R. Roy}, Auslander--Reiten Sequences over Gorenstein Rings of Dimension One (2018). Dissertations - ALL. 873.
https://surface.syr.edu/etd/873
\bibitem{R3}
{\sc R. Roy}, Graded AR Sequences and the Huneke--Wiegand Conjecture, arXiv:1808.06600.
\bibitem{Y}
{\sc Y. Yoshino}, Cohen-Macaulay modules over Cohen--Macaulay rings, London Mathematical SocietyLecture Note Series, 146,Cambridge University Press, Cambridge, 1990.
\end{thebibliography}
\end{document}